\theoremstyle{plain}
\newtheorem{theorem}{Theorem}[section]
\newtheorem{proposition}[theorem]{Proposition}
\newtheorem{lemma}[theorem]{Lemma}
\newtheorem{conjecture}{Conjecture}[section]
\theoremstyle{break}
\newtheorem{remark}[theorem]{Remark}
\newtheorem{problem}[theorem]{Problem}
\newtheorem{proof}{Proof}
\newtheorem{sketch}{Sketch of Proof}
\newtheorem{acknowledgement}{Acknowledgement}
\newcommand{\qed}{\nopagebreak\par
\hspace*{\fill}$\square$\par\vskip2mm}
\newcommand{\myemail}[1]{\indent \emph{E-mail:} {\tt #1}}
\newcommand{\myaddress}[1]{\indent {\sc #1}\par}
\newcommand{\Tr}{\mathrm{Tr}}
\renewcommand{\Im}{\mathrm{Im}}
\newcommand{\Sp}{\mathrm{Sp}}
\renewcommand{\H}{\mathbb H}
\newcommand{\T}[1]{{}^t{{#1}}}
\newcommand{\A}{{\mathbb A}}
\renewcommand{\a}{{\mathfrak a}}
\newcommand{\Q}{{\mathbb Q}}
\newcommand{\Z}{{\mathbb Z}}
\newcommand{\R}{{\mathbb R}}
\newcommand{\C}{{\mathbb C}}
\newcommand{\bs}{\backslash}
\newcommand{\GL}{{\rm GL}}
\newcommand{\PGL}{{\rm PGL}}
\newcommand{\SL}{{\rm SL}}
\newcommand{\GSp}{{\rm GSp}}
\newcommand{\PGSp}{{\rm PGSp}}
\renewcommand{\S}{{\mathcal S}}
\newcommand{\disc}{{\rm disc}}
\DeclareMathOperator{\Cl}{Cl}
\newcommand{\mat}[4]{{\setlength{\arraycolsep}{0.5mm}
\left(\begin{array}{cc}#1&#2\\#3&#4\end{array}\right)}}
\newcommand{\forget}[1]{}
\begin{document}

\title{Determination of modular forms by fundamental Fourier coefficients}

\author{Abhishek Saha}

\begin{abstract}
It is an interesting question when a natural subset of the
Fourier coefficients is sufficient to uniquely determine a modular
form.  This article deals with this question for two kinds of modular forms: a) classical modular forms of half-integral weight, and b) Siegel
modular forms of genus 2 and integral weight. These two apparently different scenarios
turn out to be closely related. Our results have several applications to automorphic
$L$-functions and Bessel models. \end{abstract}

 \maketitle

\section{Introduction}
Let $V$  be a fixed set consisting of some ``modular forms".\footnote{We are being deliberately vague here in order to include a wide variety of objects that generalize or are similar to classical modular forms.} Let $\mathcal{S}$ be an indexing set for the ``Fourier coefficients" of elements of $V$. This means that for all $\Phi \in V$, we have an expansion\footnote{The model to keep in mind here is the well-known Fourier expansion of classical modular forms.}
$$\Phi(z) = \sum_{n\in \mathcal{S}} \Phi_n(z).$$
Let $\mathcal{D}$ be an \emph{interesting} subset of $\mathcal{S}$. We are interested in situations where the following implication is true for all $\Phi \in V$: $\Phi_n = 0 \ \forall \ n \in \mathcal{D}\Rightarrow  \Phi = 0$;
 equivalently: $ \Phi \neq 0  \Rightarrow \text{ there exists } n \in \mathcal{D} \text{ such that } \Phi_n \neq 0.$ Another way of phrasing this problem is:
\emph{When does an interesting subset of Fourier coefficients determine a modular form}?

In this article, we will consider a special case of this question for the following two types of modular forms:

\begin{enumerate}
\item Classical modular forms of half-integral weight (automorphic forms on $\widetilde{\SL_2}$)

\item Siegel modular forms of degree 2 and trivial central character (automorphic forms on $\PGSp_4$)

\end{enumerate}

For both types, the interesting subset $\mathcal{D}$ will be related to $\S$ in a similar manner as  the set of squarefree integers is related to the set of all integers. Moreover, as we will see, the proof of our main result for Siegel modular forms hinges crucially on the corresponding result for classical modular forms of half-integral weight.

This article in based on the talk the author gave at the \emph{International Colloquium on
Automorphic Representations and L-Functions, 2012,}
held at the Tata Institute of
Fundamental Research. Some of the results of this article were obtained jointly with Ralf Schmidt. Most of the results are contained either in the author's paper~\cite{sahafund} or in his joint paper with Schmidt~\cite{sahaschmidt}. The presentation here will therefore be less formal and of a more descriptive nature; our goal is to tie several related results into a coherent framework and emphasize the underlying motivations and ideas. The reader is invited to consult~\cite{sahafund} and~\cite{sahaschmidt}  for the technical details of the proofs that are sketched or omitted from this article.

We briefly summarize the structure of this article. In Section~\ref{halfintsection} we deal with the case of half-integral weight modular forms. In Section~\ref{s:siegel}, we deal with the case of Siegel cusp forms of degree 2. The proofs of Section~\ref{s:siegel} need the results of Section~\ref{halfintsection} in an essential manner. Finally, in Section~\ref{s:app}, we give some important applications of our results to global Bessel models for $\GSp_4$ and simultaneous non-vanishing of dihedral twists of modular $L$-functions.

\section{The case of half-integral weight modular forms }\label{halfintsection}

\subsection{Notations and preliminaries}\label{halfintsec}
The group $\SL_2(\R)$ acts on the upper half-plane $\H$ via $$\gamma z := \frac{az+b}{cz+d}$$ for $\gamma = \mat{a}{b}{c}{d} \in \SL_2(\R)$ and $z=x+iy \in \H $. For a positive integer $N$, let $\Gamma_0(N)$ denote the congruence subgroup consisting of matrices $\mat{a}{b}{c}{d}$ in $\SL_2(\Z)$ such that $N$ divides $c$. For a complex number $z$, let $e(z)$ denote $e^{2\pi i z}$.

Let $\theta(z) = \sum_{n = -\infty}^\infty e(n^2 z)$ be the standard theta function on $\H$. If $A = \mat{a}{b}{c}{d} \in \Gamma_0(4)$, we have $\theta(Az) = j(A, z)\theta(z),$ where $j(A, z)$ is the so called $\theta$-multiplier. For an explicit formula for $j(A, z)$, see~\cite{Shimura3} or~\cite{Serre-Stark}.

Let $k$ be an integer. For a positive integer $N$ divisible by $4$, let $S_{k+\frac{1}{2}}(N)$ denote the space of holomorphic cusp forms of weight $k+\frac{1}{2}$ for the group $\Gamma_0(N)$. Precisely, a function $f : \H \rightarrow \C$ belongs to $S_{k+\frac{1}{2}}(N)$ if
\begin{enumerate}

\item $f(Az) = j(A, z)^{2k +1} f(z)$ for every $A = \mat{a}{b}{c}{d} \in \Gamma_0(N)$,
\item $f$ is holomorphic,
\item $f$ vanishes at the cusps.

\end{enumerate}

Any $f \in  S_{k+\frac{1}{2}}(N)$ has the Fourier expansion

$$f(z) = \sum_{n > 0} a(f, n)e(nz).$$ We let $\tilde{a}(f,n)$ denote the ``normalized" Fourier coefficients, defined by
 $$\tilde{a}(f,n) = a(f,n)n^{ \frac14-\frac k2}.$$ The Kohnen plus-space $S^+_{k+\frac12}(N)$ is defined to be the subspace of $S_{k+\frac{1}{2}}(N)$ consisting of forms $f$ for which $a(f,n) = 0$ whenever $n \equiv (-1)^{k+1}$ or $2 \mod{4}$. Kohnen~\cite{kohnew} developed a theory of \emph{newforms} for the space $S_{k+\frac{1}{2}}^+(N)$ in the case that $N/4 $ is odd and squarefree. He also proved~\cite{kohnwalds} a precise version of Waldspurger's theorem in this setting. If $f \in S_{k+\frac{1}{2}}^+(N)$ is a newform  and $(-1)^k d$ is a \emph{fundamental discriminant},\footnote{Recall that an integer $n$ is a fundamental discriminant if \emph{either} $n$ is a squarefree integer congruent to 1 modulo 4 \emph{or} $n = 4m$ where $m$ is a squarefree integer congruent to 2 or 3 modulo 4.} then Kohnen's formula implies that $|\tilde{a}(f,d)|^2$ is essentially equal to $L(\frac12, \pi_f\times \chi_{(-1)^kd})$; here $\pi_f$ is the automorphic representation of $\PGL_2$ attached to $f$ by the Shimura correspondence and $\chi_{(-1)^kd}$ is the quadratic Dirichlet character associated via class field theory to the field $\Q(\sqrt{(-1)^k d})$.

\subsection{The problem}

Let $\mathcal{S}$ equal the set of integers $n$ such that $(-1)^k n$ is a discriminant, i.e., $n \equiv 0$ or 1 mod $4$. If $f \in S^+_{k+\frac12}(N)$, the Fourier coefficients $a(f, n)$ are indexed by $n \in \S$. A natural subset of $\S$ is the set $\mathcal{D}$ consisting of integers $n$ such that $(-1)^k n$ is a fundamental discriminant. It is clear from the Kohnen--Waldspurger formula that if $d \in \mathcal{D}$ and $f \in S^+_{k+\frac12}(N)$ is a newform (so in particular, a Hecke eigenform at all places), then $a(f, d)$ has deep arithmetic significance. So, it is a natural question to ask whether elements of $S_{k+ \frac12}^+ (N)$ are determined by the Fourier coefficients $a(f, d)$ with $d \in \mathcal{D}$.

In the language of the introduction, we want to prove that given any non-zero modular form $\Phi$ in $V$, $\text{ there exists } n \in \mathcal{D} \text{ such that } a(f,n) \neq 0.$ Here $V$ is a suitable subset of $S^+_{k+\frac12}(N)$. The simplest version of the problem is when $V$ is a finite Hecke basis consisting of newforms. Here, we may assume that $N=4M$ where $M$ is odd and squarefree\footnote{These are the assumptions under which Kohnen developed his newform theory.}. In this case, the problem was solved by Kohnen. The proof of Kohnen's result, stated immediately below, is straightforward and uses only some combinatorics with Hecke operators at the prime 2.

\begin{theorem}[Kohnen, p. 70 of ~\cite{kohnew}]\label{t:kohsimple}
Suppose that $M$ is odd and squarefree and $0 \neq f \in S^+_{k+\frac12}(4M)$ is a newform. Then there exists a $d$ such that $(-1)^k d$ is a fundamental discriminant and $a(f,d) \neq 0$.

\end{theorem}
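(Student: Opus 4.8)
The plan is to prove the contrapositive: assuming that the newform $f$ satisfies $a(f,d)=0$ for every $d$ with $(-1)^kd$ a fundamental discriminant, I will show that all Fourier coefficients of $f$ vanish, whence $f=0$. The starting point is a purely arithmetic observation about the index set $\mathcal{S}$: every $n$ with $(-1)^kn\equiv 0,1\bmod 4$ factors uniquely as $n=d\,t^2$, where $(-1)^kd$ is a fundamental discriminant and $t\geq 1$ is an integer. This is exactly the classical decomposition of a discriminant as a fundamental discriminant times the square of its conductor. Consequently it is enough to show that each coefficient $a(f,d\,t^2)$ is a scalar multiple of $a(f,d)$, where the scalar depends only on $t$ and on the Hecke eigenvalues of $f$.

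To produce such a relation I would use that $f$, being a newform, is a simultaneous eigenform for the Hecke operators $T(p^2)$ on the plus space, say $T(p^2)f=\lambda_p f$. For an odd prime $p$ this eigenvalue equation is equivalent to the Shimura three-term recurrence, which after solving for the top coefficient reads
\[
a(f,p^2 m)=\lambda_p\,a(f,m)-\left(\frac{(-1)^k m}{p}\right)p^{k-1}a(f,m)-p^{2k-1}a(f,m/p^2),
\]
with the convention that $a(f,\ast)=0$ whenever the index is not a positive integer belonging to $\mathcal{S}$. The crucial feature is that the ``boundary'' term is under control: since $(-1)^kd$ is fundamental, the odd part of $d$ is squarefree, so $p^2\nmid d$ for every odd $p$.

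Granting these recurrences, I would induct on $t$ (equivalently on the number of prime factors of $t$ counted with multiplicity). Writing $t=p\,t_1$ with $p$ the smallest prime dividing $t$ and applying the recurrence at $p$ with $m=d\,t_1^2$, the coefficient $a(f,d\,t^2)$ is expressed through $a(f,d\,t_1^2)$ and the lower coefficient $a(f,d\,t_1^2/p^2)$. The first is a scalar multiple of $a(f,d)$ by the inductive hypothesis, while the second either vanishes outright (when $p\nmid t_1$, because then $p^2\nmid d\,t_1^2$) or corresponds to a strictly smaller conductor and is again handled by induction. Thus $a(f,d\,t^2)$ is a scalar multiple of $a(f,d)$, the induction closes, and the vanishing of all fundamental coefficients forces $f=0$.

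I expect the real difficulty to be concentrated entirely at the prime $2$. The three-term recurrence above is the eigenvalue relation only for odd $p$; at $p=2$ the passage from $a(f,4m)$ back to $a(f,m)$ is governed not by a naive recurrence but by the defining congruence conditions of the Kohnen plus space together with Kohnen's Hecke operator at $2$. Carrying out the inductive step when $t$ is even therefore hinges on the precise combinatorics of the plus-space structure at $2$, and it is here that the hypotheses $N=4M$ with $M$ odd and squarefree (so that the ramification at $2$ is as mild as possible) enter decisively. This prime-$2$ analysis is the heart of the argument and the step I would expect to require the most care.
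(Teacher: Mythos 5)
Your proposal is correct and follows exactly the route the paper attributes to Kohnen: the paper gives no proof of its own, citing Kohnen (p.~70 of \cite{kohnew}) and remarking only that the argument ``is straightforward and uses only some combinatorics with Hecke operators at the prime 2'' --- which is precisely the step you isolate as the crux. Two remarks to close your outline. First, for odd $p\mid M$ the operator in play is $U(p^2)$ rather than the full three-term $T(p^2)$, so the recurrence there is $a(f,p^2m)=\lambda_p a(f,m)$, which only makes your induction easier. Second, the deferred step at $p=2$ is handled by Kohnen's operator $T^+(4)$ on the plus space, of which a newform is also an eigenvector; its eigenvalue relation has exactly the same three-term shape as your odd-prime recurrence with the Kronecker symbol $\left(\tfrac{(-1)^k m}{2}\right)$ in place of the Legendre symbol, and the boundary term $a(f,dt_1^2/4)$ with $t_1$ odd vanishes automatically because $(-1)^k d/4\equiv 2,3\pmod 4$ is not a discriminant, so the plus-space support condition kills it. With these two points supplied, your induction closes and the argument is complete.
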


The next simplest version of the question is obtained by taking $V$ to be set of all elements of the form $a_1g_1 + a_2 g_2$ where $g_1$ and $g_2$ lie in a fixed Hecke basis consisting of newforms in $S^+_{k+\frac12}(N)$. This question was raised explicitly by Kohnen~\cite{kohn92} in 1992 and solved by Luo--Ramakrishnan~\cite{luoram} in 1997.

\begin{theorem}[Luo-Ramakrishnan~\cite{luoram}] Suppose that $M$ is odd and squarefree. Let $g_1$ and $g_2$ be multiples of newforms  in $S^+_{k+\frac12}(4M)$. Assume further that $a(g_1, d) = a(g_2, d)$ for all $d$ such that $(-1)^k d$ is a fundamental discriminant. Then $g_1 = g_2$.

\end{theorem}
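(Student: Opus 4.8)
The plan is to reduce immediately to a comparison of two newforms and then to separate the diagonal case from the off-diagonal one. Since $g_1$ and $g_2$ are multiples of newforms, write $g_1 = c_1 f_1$ and $g_2 = c_2 f_2$ with $f_1, f_2 \in S^+_{k+\frac12}(4M)$ newforms and $c_1, c_2 \in \C$. The hypothesis becomes
$$c_1\, a(f_1,d) = c_2\, a(f_2,d) \qquad \text{for all } d \text{ with } (-1)^k d \text{ a fundamental discriminant.}$$
If $c_1 = 0$, then $c_2\, a(f_2,d) = 0$ for every such $d$; but by Theorem~\ref{t:kohsimple} some $a(f_2,d)$ is nonzero, forcing $c_2 = 0$ and $g_1 = g_2 = 0$. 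So I may assume $c_1, c_2 \neq 0$, which yields the exact proportionality $a(f_1,d) = \lambda\, a(f_2,d)$ with $\lambda = c_2/c_1 \neq 0$ on all fundamental discriminants. If $f_1$ and $f_2$ are the same newform, Theorem~\ref{t:kohsimple} again produces a $d$ with $a(f_1,d) \neq 0$, whence $c_1 = c_2$ and $g_1 = g_2$. The whole difficulty is therefore concentrated in the case where $f_1$ and $f_2$ are distinct newforms, equivalently where the associated representations $\pi_{f_1}$ and $\pi_{f_2}$ of $\PGL_2$ are non-isomorphic; here I must derive a contradiction from the exact proportionality.

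For the distinct case I would exploit the Rankin--Selberg method. Form the Dirichlet series
$$D_{ij}(s) = \sum_{(-1)^k d \text{ fund.}} \frac{a(f_i,d)\,\overline{a(f_j,d)}}{|d|^{\,s+k-\frac12}},$$
where the shift is chosen so that the edge of convergence sits at $s=1$. The self-convolutions $D_{11}$ and $D_{22}$ have a simple pole at $s=1$ with positive residue, reflecting the Petersson norms of $f_1$ and $f_2$, while the cross-convolution $D_{12}$ is holomorphic at $s=1$ precisely because $\pi_{f_1} \not\cong \pi_{f_2}$ (the Rankin--Selberg $L$-function of two distinct cuspidal representations has no pole at the edge). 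On the other hand, substituting $a(f_1,d) = \lambda\, a(f_2,d)$ gives $D_{12}(s) = \lambda\, D_{22}(s)$, and since $\lambda \neq 0$ this forces $D_{12}$ to inherit the pole of $D_{22}$ at $s=1$. The two conclusions are incompatible, and this contradiction is exactly what rules out distinct newforms.

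The main obstacle is the analytic backbone of the previous paragraph: establishing the pole/no-pole dichotomy \emph{after} restricting the summation to fundamental discriminants. The genuine Rankin--Selberg convolution runs over all indices, so one must sieve down to fundamental $d$ by an inclusion--exclusion over square divisors and check that this sieving neither creates nor cancels the edge pole and that the residue of $D_{ii}$ stays nonzero (the latter amounting to the non-vanishing of an $L(1, \Sym^2 \pi_{f_i})$-type factor). A cleaner route is to invoke the Kohnen--Waldspurger formula, which identifies $|\tilde{a}(f_i,d)|^2$ with the central twisted value $L(\tfrac12, \pi_{f_i} \times \chi_{(-1)^k d})$; the statement to be proved then becomes the assertion that two non-isomorphic cuspidal representations of $\PGL_2$ cannot have proportional central $L$-values across \emph{all} quadratic twists. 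This is the heart of the Luo--Ramakrishnan argument, and they prove it by extracting the leading asymptotics of the first moment $\sum_{d} L(\tfrac12, \pi_{f_i} \times \chi_{(-1)^k d})$ over fundamental discriminants (via an approximate functional equation and Poisson summation), the point being that this leading term detects $\pi_{f_i}$ finely enough that a single constant $\lambda$ cannot reconcile two distinct representations. I expect this moment analysis --- controlling the error terms uniformly while isolating a main term sensitive to the arithmetic of $\pi_{f_i}$ --- to be the genuinely hard step; the reductions in the first paragraph and the formal pole argument are routine by comparison.
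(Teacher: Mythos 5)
Your proposal is sound and follows essentially the route the paper itself indicates: the paper gives no proof of this theorem but cites Luo--Ramakrishnan and describes their argument as passing through the Kohnen--Waldspurger formula to reduce everything to the determination of an integral-weight newform by the central $L$-values of its quadratic twists, which is exactly your ``cleaner route'' (your preliminary reduction to two distinct newforms and the degenerate cases is fine). The one refinement worth recording is that the bare first moment $\sum_d L(\tfrac12,\pi_{f_i}\times\chi_{(-1)^kd})$ does not by itself distinguish representations; Luo--Ramakrishnan twist the sum by $\chi_{(-1)^kd}(\ell)$ for a varying auxiliary prime $\ell$ so that the main term isolates the $\ell$-th Hecke eigenvalue of $\pi_{f_i}$, and strong multiplicity one then separates $\pi_{f_1}$ from $\pi_{f_2}$.
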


The above theorem clearly implies the result of Kohnen stated earlier (simply take $g_2 =0$). The proof of the theorem is quite involved and uses the Kohnen--Waldspurger formula in a crucial way. Indeed, the squares of Fourier coefficients of half-integral weight newforms are related to twisted $L$-values of integral weight  newforms; Luo--Ramakrishnan were able to prove that integral weight newforms are uniquely determined by the central $L$-values of their twists with quadratic characters.

Finally, one may consider the hardest (and most general) version of the question, where we take the set $V$ to be the entire set $S^+_{k+\frac12}(4M)$. Thus the problem becomes:

\begin{problem}\label{problem1}Let $f \in S^+_{k+\frac12}(4M)$, $M$ squarefree, $f \neq 0$. Does there exist $d$ such that  $(-1)^k d$ is a fundamental discriminant and $a(f,d) \neq 0$?

\end{problem}

Note that a solution to the above problem will automatically imply the result of Luo--Ramakrishnan (take $f = g_1 - g_2$). However, because $f$ is no longer assumed to be a Hecke eigenform or a difference of two eigenforms, it appears impossible to reduce the problem to one about central $L$-values via Waldspurger's formula. This is in sharp contrast to the results stated earlier in this subsection.

\subsection{The main result for half-integral forms}

 The next theorem gives an affirmative answer, in a strong quantitative form, to the problem posed in the previous subsection, whenever $k \ge 2$. Note that the statement of the result does not involve the Kohnen plus-space $S_{k+\frac12}^+(4M)$ at all, but only the larger space  $S_{k+\frac12}(4M)$.

\begin{theorem}\label{t:mainhalf}Let $k \ge 2$ and let $M$ be a squarefree integer. Suppose that $f \in S_{k+\frac12}(4M)$ is non-zero. Then, one has the lower bound

$$ \# \{0< d < X : d \text{ squarefree, } a(f,d) \neq 0 \} \gg_{f, \delta} X^\delta,$$ where $\delta > 0$ is an absolute constant (any value of $\delta < 5/8$ is admissible).
In particular, there are infinitely many squarefree integers $d$ such that $a(f,d) \neq 0$.
\end{theorem}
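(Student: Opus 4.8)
The plan is to reduce the statement about a general cusp form $f \in S_{k+\frac12}(4M)$ to a statement about half-integral weight \emph{eigenforms}, where the Fourier coefficients at fundamental discriminants carry arithmetic meaning via the Kohnen--Waldspurger formula. The key difficulty, as the paper itself emphasizes, is that $f$ is an arbitrary element of the full space $S_{k+\frac12}(4M)$: it need not lie in the plus-space, it need not be a Hecke eigenform, and it need not even be a difference of two eigenforms. So the first order of business is to understand what kind of object can have \emph{all} its squarefree Fourier coefficients vanish, and to show that no nonzero $f$ has this property.

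First I would set up the contrapositive and the relevant quantitative framework. Suppose $a(f,d) = 0$ for all squarefree $d$; the goal is to deduce $f = 0$, and more strongly to bound below the count of squarefree $d < X$ with $a(f,d) \neq 0$. The natural tool for capturing the ``squarefree part'' of Fourier coefficients is a Shimura-type lift or a Rankin--Selberg / Dirichlet series built out of the $a(f,d^2 n)$ for fixed squarefree fundamental $d$. Concretely, for each fundamental discriminant I would consider the Shimura correspondence attached to the coefficients $a(f, d m^2)$ as $m$ varies, which produces (at least formally) an integral-weight object whose $L$-function encodes the sequence $\{a(f,dm^2)\}_m$. The assumption that $a(f,d)=0$ for squarefree $d$ removes the ``primitive'' term and should be shown to force the entire Shimura lift attached to each $d$ to degenerate.

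The central analytic step is to convert ``$a(f,d) \neq 0$ for some squarefree $d$'' into a statement about \emph{nonvanishing of the form along a congruence class} and then to exploit the rigidity of the Fourier expansion. Here I would use the approach of encoding the squarefree coefficients via a Dirichlet series $\sum_{d \ \text{squarefree}} a(f,d) d^{-s}$ (suitably normalized using $\tilde a(f,d)$), analyze its analytic continuation and possible poles, and show that its vanishing is incompatible with $f \neq 0$. The quantitative lower bound of the form $X^\delta$ with $\delta$ approaching $5/8$ strongly suggests that the real engine is a mollified second-moment or large-sieve estimate: one squares the coefficients, averages $\sum_{d < X} |\tilde a(f,d)|^2$ over squarefree $d$, and bounds this main term from below while controlling an off-diagonal or error contribution. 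The exponent $5/8 = 1 - 3/8$ is exactly the sort of barrier produced by a subconvexity-type or Iwaniec-style amplification argument applied to the associated metaplectic form, so I expect the proof to hinge on such an average rather than on individual coefficient estimates.

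The main obstacle, and the step I would spend the most effort on, is dealing with the \emph{non-eigenform} nature of $f$. For a single eigenform the Kohnen--Waldspurger formula directly identifies $|\tilde a(f,d)|^2$ with a central $L$-value, and nonvanishing of $L$-values at infinitely many $d$ is classical; but a general $f$ is a linear combination of eigenforms whose coefficients can conspire to cancel on the fundamental-discriminant indexing set. My strategy to overcome this would be to \emph{not} work coefficient-by-coefficient but to study the full metaplectic automorphic form attached to $f$ on $\widetilde{\SL_2}$ and prove a nonvanishing result for a second moment that is insensitive to the eigenform decomposition --- essentially a positivity argument: $\sum_{d} |\tilde a(f,d)|^2 w(d/X)$ for a smooth weight $w$ is a sum of nonnegative terms, so if the diagonal main term is genuinely nonzero (which must be extracted from $\langle f, f\rangle \neq 0$ via a Rankin--Selberg unfolding restricted to squarefree indices), then it cannot be cancelled. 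Making the main term of this second moment nonzero and bounding the tail to secure the exponent $\delta < 5/8$ is where the genuine analytic work lies, and it is precisely this positivity-plus-averaging mechanism that sidesteps the impossibility of reducing to a single $L$-value that the paper flags.
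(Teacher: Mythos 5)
Your final paragraph lands on the paper's actual mechanism: one studies the nonnegative quantity $\sum_{d\ \mathrm{squarefree}} |\tilde a(f,d)|^2 e^{-d/X}$ (in the paper, with an additional coprimality condition $(d,M)=1$ for a suitably chosen auxiliary integer $M$), and positivity is exactly what circumvents the fact that $f$ is not an eigenform. Your first two paragraphs (a Shimura-type lift attached to each fundamental discriminant, the unsquared Dirichlet series $\sum_d a(f,d)d^{-s}$) are detours you never actually use, and they would not help as stated: for a non-eigenform there is no Euler product and no Kohnen--Waldspurger identity to exploit. In the part you do pursue there are two concrete gaps. First, you have the logic of the exponent backwards: you write that the proof should ``hinge on such an average rather than on individual coefficient estimates,'' whereas the count $\gg X^{\delta}$ is obtained precisely by \emph{combining} the average lower bound $\sum_{0<d<X,\ d\ \mathrm{squarefree}} |\tilde a(f,d)|^2 \gg_f X$ with the individual upper bound $|\tilde a(f,d)|^2 \ll_{f,\delta} d^{1-\delta}$ (Iwaniec's $\delta<4/7$, improved to $5/8$ by Bykovski\u{\i}); the number of nonzero terms is then forced to be at least of order $X/X^{1-\delta}=X^{\delta}$. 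The constant $5/8$ has nothing to do with amplification of the averaged quantity; it is the pointwise Fourier-coefficient bound.

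Second, the step you label ``Rankin--Selberg unfolding restricted to squarefree indices'' is where the real work lies, and your sketch contains no mechanism for it. The paper detects the squarefree condition by a M\"obius sieve, writing the restricted second moment as $\sum_{r} \mu(r) \sum_{n} |\tilde a(f,nr^2)|^2 e^{-nr^2/X}$, so one must bound the terms with $r>1$ from above uniformly enough to sum over $r$; this is done via the action of Hecke operators at the primes dividing $r$ together with Deligne's bound, and it is exactly this step that forces the introduction of the coprimality condition $(d,M)=1$ for a large auxiliary $M$. The leading $r=1$ term is bounded below by a theorem of Duke and Iwaniec, not by a soft appeal to $\langle f,f\rangle\neq 0$. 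Without (a) the pointwise bound and (b) a quantitative handle on the non-squarefree contribution, the positivity argument does not close.
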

The above theorem is not true if $k <2$; counter-examples can be obtained by considering suitable theta-series. Note also that if $k \ge 2$, it gives a positive answer to Problem~\ref{problem1}. Indeed, if $f \in  S_{k+\frac12}^+(4M)$, then $a(f,n)$ is supported on the integers $n$ for which $(-1)^kn$ is a discriminant. If such a $n$ is squarefree, it is automatically a fundamental discriminant.

Theorem~\ref{t:mainhalf} is a special case of a more general result (involving not necessarily squarefree levels and possibly nontrivial nebentypus) that was proven in~\cite{sahafund} and further generalized in~\cite{sahaschmidt}. So, we will only briefly sketch the proof here; the reader is invited to consult~\cite{sahafund} for further details.

\begin{sketch}
For any positive integer $M$, define $$S(M,X;f) := \sum_{\substack{d\text{ squarefree } \\ (d, M) = 1}} |\tilde{a}(f, d)|^2 e^{-d/X}.$$
Suppose that we can show there exists $M$ such that $S(M,X;f) \gg_f X$. The result then follows immediately from any bound of the form $$|\tilde{a}(f, d)|^2 \ll_{f,\delta} d^{1 - \delta}.$$ The first non-trivial bound of this form is due to Iwaniec~\cite{iwanfourier} who showed that one can take any $\delta < 4/7$. The best current bound is due to Bykovski{\u\i}~\cite{byko} who improved this to $5/8$; see also Blomer--Harcos~\cite{blomer-harcos08}. The (unproven) Lindel\"of hypothesis for twisted $L$-functions of $f$ implies that one take any $\delta < 1$.

So the key point is to prove that $S(M,X;f) \gg_f X$ for some $M$. For this, we re-write $S(M, X;f)$ using a ``squarefree sieve", as follows.

$$S(M,X;f)  = \sum_{\substack{\ r \text{ squarefree } \\ (r, M) = 1}} \mu(r) \sum_{\substack{n >0 \\ (n, M) = 1}} |\tilde{a}(f, nr^2)|^2e^{-r^2n/X} ,$$ where $\mu(n)$ denotes the Mobius function. We can bound the leading term (corresponding to $r=1$) from below by using a theorem of Duke and Iwaniec~\cite{dukeiwan}. The other terms can be bounded from above by using the theory of Hecke operators and the Deligne bound in the Hecke eigenvalues (see~\cite{sahafund} for details). Putting things together, one sees that for large enough $M$, we have $S(M,X;f) \gg_f X$. As noted earlier, this completes the proof.
\qed
\end{sketch}

\section{The case of Siegel cusp forms of degree 2}\label{s:siegel}

\subsection{Notations and preliminaries}

For any commutative ring $R$ and positive integer $n$, let $M_n(R)$
  denote the ring of $n$ by $n$ matrices with entries in $R$ and $\GL_n(R)$ denote the group of invertible matrices.  If
  $A\in M_n(R)$, we let $\T{A}$ denote its transpose. We say that a symmetric matrix in $M_n(\Z)$ is semi-integral if
  it has integral diagonal entries and half-integral off-diagonal
  ones.  Denote by $J$ the $4$ by $4$ matrix given by
$$
J =
\begin{pmatrix}
0 & I_2\\
-I_2 & 0\\
\end{pmatrix}.
$$ where $I_2$ is the identity matrix of size 2.

Define the algebraic group
  $\Sp_4$ over $\Z$ by
$$
\Sp_4(R) = \{g \in \GL_4(R) \; | \; \T{g}Jg =
  J\}
$$
for any commutative ring $R$.

The Siegel upper-half space of degree 2 is defined by
$$
\H_2 = \{ Z \in M_2(\C)\;|\;Z =\T{Z},\ \Im(Z)
  \text{ is positive definite}\}.
$$
The group $\Sp_4(\R)$ acts on $\H_2$ via
$$
 gZ := (AZ+B)(CZ+D)^{-1}\qquad\text{for }
 g=\begin{pmatrix} A&B\\ C&D \end{pmatrix} \in \Sp_4(\R),\;Z\in \H_2.
$$
We let $J(g,Z) = CZ + D$.
For any positive integer $N$,
define
\begin{equation}\label{Gamma0defeq}
\Gamma_0^{(2)}(N) := \left\{\begin{pmatrix}A&B\\ C&D \end{pmatrix} \in \Sp_4(\Z)\;|\;C \equiv 0 \pmod{N}\right\}.
\end{equation}
Let $S_k^{(2)}(N)$ denote the space of holomorphic functions $F$ on
$\H_2$ which satisfy the relation
\begin{equation}\label{siegeldefiningrel}
F(\gamma Z) = \det(J(\gamma,Z))^k F(Z)
\end{equation}
for $\gamma \in \Gamma_0^{(2)}(N)$, $Z \in \H_2$, and vanish at all the
cusps. Elements of $S_k^{(2)}(N)$ are often referred to as Siegel cusp forms of degree (genus) 2, weight $k$ and level $N$. The Siegel cusp forms of degree 2 can be viewed as the simplest generalization of the classical cusp forms, and they arise naturally in number theory and representation theory. They also have applications to coding theory and conformal field theory.

  Any $F\in S_k^{(2)}(N)$ has a Fourier expansion $$F(Z)
=\sum_{T > 0} a(F, T) e(\Tr(TZ)),$$ where $T$ runs through all symmetric, semi-integral, positive-definite matrices of
size two, or equivalently, all positive, integral, binary quadratic forms. In fact, because $\begin{pmatrix}A&0\\0&\T{A}^{-1} \end{pmatrix} \in \Gamma_0^{(2)}(N)$ for all $A \in \SL_2(\Z)$, we have, using~\eqref{siegeldefiningrel},  that \begin{equation}\label{siegelinv}
a(F, \T{A}TA) = a(F,T)
\end{equation}
for all $A \in \SL_2(\Z)$, thus showing that $a(F,T)$ only depends on the $\SL_2(\Z)$-equivalence class of $T$, i.e., only on the proper equivalence class of the associated binary quadratic form.

We denote by $ S_k^{(2), \text{O}}(N)$ the linear subspace of  $ S_k^{(2)}(N)$ spanned by the set $$\{F(dZ) : F \in S_k^{(2)}(M), \ dM | N, \ M \neq N \}.$$
Note that if $N=1$, then $ S_k^{(2), \text{O}}(N) = \{0 \}$.

\subsection{The problem}

If $F \in S_k^{(2)}(N)$, the Fourier coefficients $a(F, S)$ are indexed by $S \in \S$, where $\mathcal{S}$ equals the set of matrices of the form
$$
 S=\mat{a}{b/2}{b/2}{c},\qquad a,b,c\in\Z, \qquad a>0, \qquad \disc(S) := b^2 - 4ac < 0.
$$

If $\gcd(a,b,c)=1$, then $S$ is called \emph{primitive}. If $\disc(S)$ is a fundamental discriminant, then $S$ is called \emph{fundamental}. Observe that if $S$ is fundamental, then it is automatically primitive. Observe also that the following conditions are equivalent: a) $\disc(S)$ is squarefree, b) $S$ is fundamental with $\disc(S)$ odd.

One possible choice of the interesting subset $\mathcal{D}$ of $\S$ consists of all the primitive matrices. In this context, the problem stated in the introduction was solved in the special case $N=1$ by Zagier \cite[p.\ 387]{zagier81}. This result was generalized by Yamana \cite{yamana09} to cusp forms with level and of higher degree. Using Yamana's result, Ibukiyama and Katsurada proved the following theorem.\footnote{The Theorem of Ibukiyama--Katsurada is actually for Siegel cusp forms of any degree but we state it only for degree 2 here for convenience.}

\begin{theorem}[Ibukiyama--Katsurada~\cite{ibukat11}]
\label{t:ibukat} Let $F \in S_k^{(2)}(N)$ belong to the orthogonal complement of $ S_k^{(2), \text{O}}(N)$ and suppose that $a(F, S) = 0$ for all the primitive matrices $S$. Then $F=0$.

\end{theorem}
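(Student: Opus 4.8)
The plan is to reduce the assertion to the structural determination theorem of Yamana and then to close the argument with a single orthogonality step. Recall that Yamana's theorem (which generalizes Zagier's level-one result and is valid for cusp forms of arbitrary degree and level) may be read as saying that the primitive Fourier coefficients determine a Siegel cusp form \emph{modulo oldforms}: if every primitive Fourier coefficient of a form in $S_k^{(2)}(N)$ vanishes, then that form already lies in the old space $S_k^{(2),\text{O}}(N)$. Granting this, the theorem is immediate. Our $F$ satisfies $a(F,S)=0$ for all primitive $S$, so Yamana's result places $F$ in $S_k^{(2),\text{O}}(N)$; but $F$ is assumed to lie in the orthogonal complement of $S_k^{(2),\text{O}}(N)$, and since the Petersson inner product is positive definite on the finite-dimensional space of cusp forms, a vector lying simultaneously in a subspace and in its orthogonal complement must vanish. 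Hence $F=0$. Thus the entire weight of the theorem is carried by Yamana's structural input, and the rest of the plan is devoted to how I would establish that.

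To prove the structural statement I would argue by a double induction, on the level $N$ and on the content $m(S)=\gcd(a,b,c)$ of an index $S=\mat{a}{b/2}{b/2}{c}$, writing $S=m(S)\,S_0$ with $S_0$ primitive. The base case $N=1$ is exactly Zagier's theorem: there the old space is trivial, and the vanishing of all primitive coefficients forces $F=0$. For the inductive step one separates the prime divisors of the content into those dividing $N$ and those coprime to $N$. The portion of $F$ supported on indices whose content is built from primes $p\mid N$ should be absorbed into the old space: here one expects the scaling maps $G\mapsto G(pZ)$, their Fourier-theoretic adjoints (the operators sending $a(G,S)\mapsto a(G,pS)$), and the trace maps down to level $N/p$ to exhibit this portion as coming from forms of strictly smaller level, hence from $S_k^{(2),\text{O}}(N)$; an inclusion--exclusion over the primes dividing $N$ then peels all of these contributions off. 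The remaining, and genuinely hard, case is that of a prime $q\nmid N$: there is no oldform at a level divisible by $q$ to absorb the corresponding coefficients, so these must be shown to vanish outright. This is resolved using the Hecke operators $T(q)$ at the good prime $q$, whose action on Fourier coefficients expresses $a(F,qS)$ in terms of coefficients of $F$ at indices of strictly smaller content together with the coefficients of $T(q)F$, feeding the content induction.

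The main obstacle will be precisely this good-prime step. Because $F$ is an arbitrary element of the orthogonal complement of the old space rather than a single Hecke eigenform, one cannot simply invoke an eigenvalue identity to recover $a(F,qS)$ from lower-content data, and the vanishing hypothesis on primitive coefficients does not obviously descend to the individual Hecke eigencomponents of $F$. Surmounting this requires either a simultaneous diagonalization of the new space under the good Hecke algebra combined with a linear-independence argument for the resulting Fourier expansions, or a direct generating-series manipulation in the spirit of Zagier that produces the required vanishing without passing to eigenforms. Once the good-prime vanishing is secured, the content induction terminates and Yamana's structural statement follows; the positive-definiteness argument of the first paragraph then completes the proof of the theorem.
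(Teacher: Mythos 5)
The paper itself offers no proof of this statement; it is quoted from Ibukiyama--Katsurada \cite{ibukat11}, who, as the surrounding text indicates, deduced it from Yamana's theorem \cite{yamana09}. Your first paragraph is exactly that deduction: Yamana's result (read as ``vanishing of all primitive Fourier coefficients forces membership in $S_k^{(2),\text{O}}(N)$'') combined with positive-definiteness of the Petersson inner product immediately gives $F=0$. This is correct and is the same (cited) route the paper relies on, so with a citation to \cite{yamana09} you could stop after the first paragraph. The second and third paragraphs, in which you try to reprove Yamana's structural statement by induction on the level and on the content of the index, are not needed for the theorem as stated and do not constitute a complete argument: as you yourself flag, the step at a good prime $q\nmid N$ --- showing that the coefficients supported on indices of content divisible by $q$ vanish outright for an arbitrary $F$, not merely for Hecke eigenforms --- is precisely where the substance of Zagier's level-one argument \cite{zagier81} and of Yamana's generalization lies, and your sketch leaves it open. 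If the intent is to treat Yamana's theorem as a black box, the proof is fine; if the intent is to prove it, there is a genuine gap at that point.
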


In the language of the introduction, Theorem~\ref{t:ibukat} addresses the case when $V$ equals the orthogonal complement  of $ S_k^{(2), \text{O}}(N)$ and $\mathcal{D}$  equals the subset of $\S$ consisting of the primitive matrices. Note that it is not possible to extend Theorem~\ref{t:ibukat} so that $V$ equals the full set $S_k^{(2)}(N)$. This is because forms such as $F(NZ)$ with $F \in S_k^{(2)}(1)$ are only supported on non-primitive coefficients.

From the point of view of  representation theory, it is the smaller set of fundamental Fourier coefficients that are more interesting  than the primitive ones. In the language of the introduction, let $\S$ be as before and let $\mathcal{D}$ consist of the fundamental matrices. Then, the simplest question in this direction is obtained by taking $V$ to be a finite set consisting of eigenforms at all places.

\begin{problem}\label{siegelsimp}Let $F \in S_k^{(2)}(N)$ be a non-zero element in the orthogonal complement of $ S_k^{(2), \text{O}}(N)$ and an eigenfunction for the local Hecke algebras at all primes. Show that there exists a fundamental matrix $S$ such that $a(F,S) \neq 0$.
\end{problem}

The above problem is a neat analogue of Theorem~\ref{t:kohsimple}, and turns out to have deep consequences to the theory of Bessel models for automorphic forms, as well as to non-vanishing of periods and simultaneous non-vanishing of dihedral twists of modular $L$-functions. These applications will form the heart of Section~\ref{s:app}. It is perhaps surprising (given how easy the proof of Theorem~\ref{t:kohsimple} is) that Problem~\ref{siegelsimp} does not appear to be amenable to any attacks relying  on the theory of Hecke operators or local representation theory (even in the simplest case $N=1$).

One can be much more ambitious, of course, and take $V$ as in Theorem~\ref{t:ibukat}. This leads to the following significant generalization of Problem~\ref{siegelsimp}.

\begin{problem}\label{siegelhard}Let $F \in S_k^{(2)}(N)$ be a non-zero element in the orthogonal complement of $ S_k^{(2), \text{O}}(N)$. Show that there exists a fundamental matrix $S$ such that $a(F,S) \neq 0$.
\end{problem}

\subsection{The main result for Siegel cusp forms}

 The next theorem, which is a mild variation of~\cite[Thm. 2]{sahaschmidt} gives an affirmative answer, in a strong quantitative form, to Problem~\ref{siegelhard}, whenever $N$ is squarefree and $k > 2$ is even.

\begin{theorem}\label{t:mainsiegel}Let $k > 2$ and $N$ be a squarefree integer. Moreover, if $N>1$, assume that $k$ is even. Let $0 \ne F \in S_k^{(2)}(N)$ belong to the orthogonal complement of $ S_k^{(2), \text{O}}(N)$. Let $\delta>0$ be an absolute constant as in Theorem~\ref{t:mainhalf}. Then, one has the lower bound
$$|\{0 < d < X , \ d \text{ squarefree }, \ a(F, S) \ne 0 \text{ for some }  S  \text{ with }  d = -\disc (S)   \}| \gg_{F, \delta} X^\delta. $$

In particular, there are infinitely many fundamental matrices $S$ such that $a(F,S) \neq 0$.

\end{theorem}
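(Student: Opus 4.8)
The plan is to reduce the statement to Theorem~\ref{t:mainhalf} through the Fourier--Jacobi expansion of $F$ together with the Eichler--Zagier theta decomposition. Writing $Z=\mat{\tau}{z}{z}{\tau'}$, I would expand
$$F(Z)=\sum_{m\ge1}\phi_m(\tau,z)\,e(m\tau'),$$
where each $\phi_m$ is a Jacobi cusp form of weight $k$ and index $m$ with Fourier coefficients $c_m(n,r)=a\!\left(F,\mat{n}{r/2}{r/2}{m}\right)$. Since $c_m(n,r)$ depends only on $r\bmod 2m$ and on the discriminant $r^2-4nm$, the theta decomposition
$$\phi_m(\tau,z)=\sum_{\mu\bmod 2m}h_\mu(\tau)\,\theta_{m,\mu}(\tau,z)$$
yields component functions $h_\mu$ of weight $k-\tfrac12$ whose $N$-th Fourier coefficient (for $N\equiv-\mu^2\bmod 4m$) equals $a(F,S)$ for a matrix $S$ with $-\disc(S)=N$. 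The prototype here is Eichler--Zagier's isomorphism $J_{k,1}\cong S^+_{k-1/2}(4)$ in index $1$.

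Assuming I can choose the index $m$ so that some $h_\mu$ is nonzero and, after the standard rescaling $\tau\mapsto 4m\tau$, defines a genuine scalar cusp form $g\in S_{(k-1)+\frac12}(4M')$ with $M'$ squarefree, the remainder is immediate. The half-integral weight is $(k-1)+\tfrac12$, and since $k>2$ we have $k-1\ge2$, so Theorem~\ref{t:mainhalf} applies to $g$ and produces $\gg_{g,\delta}X^\delta$ squarefree $N<X$ with $a(g,N)\ne0$. For each such $N$ there is a matrix $S$ with $-\disc(S)=N$ and $a(F,S)\ne0$; because $N$ is squarefree, $S$ is automatically fundamental, and distinct $N$ give distinct values of $d=-\disc(S)$. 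This transfers the lower bound of Theorem~\ref{t:mainhalf} to the count in Theorem~\ref{t:mainsiegel} with the same admissible $\delta$.

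The crux, and the step where the hypotheses are genuinely used, is producing such an index $m$: I must exhibit a Fourier--Jacobi coefficient $\phi_m$ of squarefree index with $\phi_m\ne0$ and arrange that the associated half-integral weight form has \emph{squarefree} level. This squarefree-level requirement is exactly what forces the orthogonal complement hypothesis. Indeed, for $F$ inside $S_k^{(2),\text{O}}(N)$, say $F(Z)=G(NZ)$, every Fourier coefficient sits at a non-primitive $S$, the theta components are supported on discriminants divisible by $N^2$, and the associated half-integral weight form has level divisible by $N^2$ --- precisely the non-squarefree case that Theorem~\ref{t:mainhalf} excludes (and, consistently, such a form has no squarefree coefficients at all). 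Restricting $F$ to the orthogonal complement of $S_k^{(2),\text{O}}(N)$, with $N$ squarefree and $k$ even when $N>1$, is what lets me locate a ``new'' Fourier--Jacobi coefficient of squarefree index and control its level at the primes dividing $N$ and at $2$. I expect this reduction to be the main obstacle: it requires the local newform/oldform theory for the relevant Jacobi (equivalently, metaplectic) forms and a careful analysis of the theta correspondence at the bad primes, and it is the source of the parity restriction when $N>1$.
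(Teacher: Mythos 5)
Your overall strategy --- pass to a Fourier--Jacobi coefficient, apply the theta decomposition to obtain a half-integral weight form of squarefree level, and invoke Theorem~\ref{t:mainhalf} (noting $k-1\ge 2$) --- is indeed the skeleton of the paper's proof, and your endgame (squarefree discriminant implies fundamental, distinct $d$'s, same $\delta$) is fine. But the step you defer as ``the main obstacle'' is precisely the content of the theorem, and the route you sketch for it (oldform/newform theory for Jacobi forms, analysis of the theta correspondence at bad primes) is neither carried out nor how the argument actually goes. The paper produces the required index by two ingredients you never invoke: first, Theorem~\ref{t:ibukat} (Ibukiyama--Katsurada, resting on Zagier and Yamana), which is the \emph{only} place the orthogonal-complement hypothesis enters and which yields a \emph{primitive} matrix $T'$ with $a(F,T')\ne 0$; second, the classical fact that a primitive binary quadratic form represents infinitely many primes, which combined with the $\SL_2(\Z)$-invariance \eqref{siegelinv} lets one replace $T'$ by an equivalent matrix whose lower-right entry is an odd prime $p\nmid N$. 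One then works with the Fourier--Jacobi coefficient of index $p$: the associated form $h$ lies in $S_{k-\frac12}(4pN)$ with $pN$ squarefree, and --- crucially --- the prime-index choice makes the non-vanishing of $h$ immediate, since the relevant coefficient $c(d_0)$ is a sum of just two terms that do not cancel. Without something like this, your plan stalls: a nonzero $F$ certainly has some nonzero Fourier--Jacobi coefficient $\phi_m$, but nothing forces such an $m$ to be squarefree, and you give no mechanism for finding one.

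A second, related gap: you propose to apply Theorem~\ref{t:mainhalf} to a single theta component $h_\mu$. Individual components transform with a nontrivial multiplier and do not obviously land in a space $S_{k-\frac12}(4M)$ with $M$ squarefree and trivial character to which Theorem~\ref{t:mainhalf} as stated applies; the paper instead uses the full Eichler--Zagier/Manickam--Ramakrishnan combination $h(\tau)=\sum_m c(m)e(m\tau)$ summed over all $\mu$, which does land in $S_{k-\frac12}(4pN)$, at the cost of having to rule out cancellation among the $\mu$ --- again handled by the prime-index choice. Your diagnosis that the orthogonal-complement hypothesis is about level control at the bad primes is therefore off the mark: its role is solely to make Theorem~\ref{t:ibukat} applicable, after which everything is elementary manipulation of quadratic forms.
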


Note that if $N =1$, the conditions $k>2$ and $F$ belonging to the orthogonal complement of $ S_k^{(2), \text{O}}(N)$ are both automatic.

\begin{sketch} By Theorem~\ref{t:ibukat}, there exists a \emph{primitive} matrix $T'$ such that $a(F, T') \neq 0$.  Suppose that $T '= \mat{a}{b/2}{b/2}{c}$. It is a classical result (going back at least to Weber) that the primitive quadratic form $ax^2 + bxy + cy^2$ represents infinitely many primes. So, let $x_0$, $y_0$ be such that $ax_0^2 + bx_0y_0 + cy_0^2$ is an odd prime not dividing $N$. Since this implies $\gcd(x_0, y_0) = 1$, we can find integers $x_1$, $y_1$ such that $A= \mat{y_1}{y_0}{x_1}{x_0} \in \SL_2(\Z).$ Then $T = \T{A}T'A$ has the property that $a(F, T) \neq 0$ and $T$ is of the form $\mat{a_0}{b_0/2}{b_0/2}{p}$ where $p$ is an odd prime not dividing $N$.

Note that so far, we have only used the $\SL_2(\Z)$-invariance of the Fourier coefficients and the theorem of Ibukiyama and Katsurada. To complete the proof, we are going to cook up a classical half-integral weight form and appeal to Theorem~\ref{t:mainhalf}. For simplicity, assume henceforth that $k$ is even. For all integers $n, r$ with $4np > r^2$, let us denote $$c(n,r) = a\left(F, \mat{n}{r/2}{r/2}{p}\right).$$ Now, let $$h(\tau) = \sum_{m=1}^\infty  c(m) e(m \tau).$$
where $$c(m) = \sum_{\substack{0 \le \mu \le 2p-1 \\ \mu^2 \equiv -m \pmod{4p}}} c\left((m+\mu^2)/4p, \ \mu \right).$$
By Theorem 4.8 of~\cite{manickram}, we know that $h \in S_{k-\frac{1}{2}}(4pN)$.

It is easy to see that $h$ is not identically equal to 0. Indeed put $d_0 = 4 a_0 p - b_0^2$. Then $c(d_0)$ equals $a\left(F, \mat{a_0}{b_0/2}{b_0/2}{p}\right) + a\left(F, \mat{a_0+p-b_0}{\ p - b_0/2}{p - b_0/2}{\ p}\right)$, which is simply $2 a\left(F, \mat{a_0}{b_0/2}{b_0/2}{p}\right)$ by~\eqref{siegelinv} and hence non-zero.

Now, by Theorem~\ref{t:mainhalf}, it follows that $$|\{0 < d<X, \ d \text{ squarefree }, \ c(d) \ne 0\}|\gg_{h,\delta} X^\delta. $$ For any of these $d$, there exists a $\mu$ such that  $c\left(\frac{d+\mu^2}{4p},\mu \right) = a \left(F, \mat{\frac{d+\mu^2}{4p}}{\mu/2}{\mu/2}{p}\right)$ is not equal to zero. This completes the proof.

\qed

\end{sketch}

\begin{remark}The construction of a half-integral weight form $h$ from the Fourier coefficients of $F$ in the proof above is best understood as arising from the isomorphism between the space of Jacobi forms and the space of modular forms of half-integral weight. In the case $N=1$, this isomorphism was first investigated in great detail by Skoruppa in his thesis.
\end{remark}

\begin{remark} In the applications, we will only need Theorem~\ref{t:mainsiegel} when $F$ is a Hecke eigenform. However, even in that case, the half-integral form $h$ constructed above need not be a Hecke eigenform! That is why, even if we wished to prove Theorem~\ref{t:mainsiegel} only for eigenforms, we would still need to prove Theorem~\ref{t:mainhalf} in general.

\end{remark}

\subsection{A mild restatement}

In this subsection, we will restate Theorem~\ref{t:mainsiegel} in a form that will be useful for applications. Let $-d <0$ be a fundamental discriminant and put $K = \Q(\sqrt{-d})$. Let $\Cl_K$ denote the ideal class group of $K$. It is a fact going back to Gauss that the $\SL_2(\Z)-$equivalence classes of binary quadratic forms of discriminant $-d$ are in natural bijective correspondence with the elements of $\Cl_K$. In view of~\eqref{siegelinv} and the comments immediately after, it follows that for any $c \in \Cl_K$ the notation $a(F, c)$ makes sense.

In particular, for any $F\in S_k^{(2)}(N)$, any imaginary quadratic field $K$ (with discriminant equal to $-d$) and any character $\Lambda$ of the finite group $\Cl_K$, we can make the definition

\begin{equation}\label{rfdef}
R(F, K, \Lambda) = \sum_{c \in \Cl_K}a(F, c) \Lambda^{-1}(c).
\end{equation}

An immediate corollary of Theorem~\ref{t:mainsiegel} is the following proposition.

\begin{proposition}\label{proprest}Let $k > 2$ and $N$ be a squarefree integer. Moreover, if $N>1$, assume that $k$ is even. Let $ F \in S_k^{(2)}(N)$ be a non-zero form belonging to the orthogonal complement of $ S_k^{(2), \text{O}}(N)$. Then there are infinitely many pairs $(K, \Lambda)$ with $K$ an imaginary quadratic field and $\Lambda$ an ideal class character of $\Cl_K$, such that  $R(F, K, \Lambda) \neq 0$.
\end{proposition}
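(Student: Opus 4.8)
The plan is to deduce the proposition directly from Theorem~\ref{t:mainsiegel} together with elementary Fourier analysis on the finite abelian group $\Cl_K$. The key observation is that $R(F, K, \Lambda)$, viewed as a function of $\Lambda$, is nothing but the discrete Fourier transform of the function $c \mapsto a(F, c)$ on $\Cl_K$; hence this transform vanishes identically if and only if all the coefficients $a(F,c)$ vanish. All the analytic substance of the statement is therefore already contained in Theorem~\ref{t:mainsiegel}, and what remains is a routine orthogonality argument.

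First I would invoke Theorem~\ref{t:mainsiegel}. Since $F$ satisfies its hypotheses, there are infinitely many squarefree integers $d > 0$ for which there exists a matrix $S$ with $-\disc(S) = d$ and $a(F,S) \neq 0$. For each such $d$, the condition that $\disc(S) = -d$ is squarefree forces $-d$ to be an odd fundamental discriminant (by the equivalence recorded just before Theorem~\ref{t:ibukat}), so $K := \Q(\sqrt{-d})$ is an imaginary quadratic field of discriminant $-d$. By Gauss's correspondence the $\SL_2(\Z)$-classes of forms of discriminant $-d$ are indexed by $\Cl_K$, and in view of~\eqref{siegelinv} the existence of an $S$ with $a(F,S) \neq 0$ means precisely that the function $c \mapsto a(F, c)$ on $\Cl_K$ is not identically zero.

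Next I would apply orthogonality of characters. For the finite abelian group $\Cl_K$, Fourier inversion gives, for each $c \in \Cl_K$,
$$\sum_{\Lambda} R(F, K, \Lambda)\, \Lambda(c) = |\Cl_K|\, a(F, c),$$
where $\Lambda$ runs over all characters of $\Cl_K$ and the identity follows from $\sum_{\Lambda}\Lambda(g) = |\Cl_K|$ or $0$ according as $g$ is or is not the identity. Consequently, if some $a(F, c) \neq 0$, then at least one summand on the left must be nonzero, i.e.\ $R(F, K, \Lambda) \neq 0$ for some character $\Lambda$ of $\Cl_K$. Thus each of the infinitely many $d$ produced above yields a pair $(K, \Lambda)$ with $R(F, K, \Lambda) \neq 0$.

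Finally, since distinct squarefree values of $d$ give distinct fields $K$ (as $-d$ is the discriminant of $K$), the resulting pairs $(K, \Lambda)$ are pairwise distinct, so there are infinitely many of them. There is no genuine obstacle in this argument: the hard input—infinitude of squarefree discriminants carrying a nonvanishing fundamental Fourier coefficient—is exactly Theorem~\ref{t:mainsiegel}, and the only new step, passing from a single nonzero coefficient $a(F,c)$ to a nonzero twisted sum $R(F, K, \Lambda)$, is the standard fact that a nonzero function on a finite abelian group has a nonzero Fourier transform.
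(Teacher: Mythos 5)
Your proposal is correct and follows essentially the same route as the paper: invoke Theorem~\ref{t:mainsiegel} to produce infinitely many squarefree $d$ with a nonvanishing coefficient of discriminant $-d$, then apply Fourier inversion (orthogonality of characters) on $\Cl_K$ to pass from a nonzero $a(F,c)$ to a nonzero $R(F,K,\Lambda)$. The paper states this in two lines; you have merely written out the same argument in full detail, including the (correct) observation that distinct $d$ yield distinct fields $K$.
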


\begin{proof} Take $K = \Q(\sqrt{-d})$ to be any field for which there exists a  semi-integral matrix $S  \text{ with }  -d = \disc (S)$ and $a(F, S) \neq 0$. The proposition is now clear from Theorem~\ref{t:mainsiegel} by Fourier inversion on the finite group $\Cl_K$.
\qed

\end{proof}

\section{Applications}\label{s:app}

\subsection{Existence of nice Bessel models for Siegel cusp forms}

The goal of this subsection is to explain a certain application of Theorem~\ref{t:mainsiegel} that was the original motivation for the author's work in this direction.

When working with automorphic representations on some reductive group, it is useful to have a \emph{model} for the representation consisting of a space of nicely transforming functions on the group. For automorphic representations on $\GL_n$, such a model is provided by the space of adelic Whittaker functions. Indeed, every cuspidal automorphic representation on $\GL_n(\A)$ has a Whittaker model, that is to say, the representation space consisting of Whittaker functions on $\GL_n(\A)$ contains (with multiplicity one!) each cuspidal automorphic representation of $\GL_n(\A)$. For automorphic representations on other classical groups, a Whittaker model does not necessarily exist. In particular, Siegel cusp forms of degree 2 correspond to automorphic representations on $\GSp_4(\A)$ which \emph{never} have Whittaker models. A useful substitute for the  missing Whittaker model is the so-called Bessel model.

 In contrast to Whittaker models, which are essentially canonical, Bessel
models depend on some arithmetic data. In the case of cuspidal automorphic representations on $\GSp_4(\A)$, there are two ingredients that go into the data defining a Bessel model\footnote{Here we are suppressing a third ingredient, which is the choice of additive character.}. One of them is a two by two non-degenerate symmetric matrix $S \in M_2(\Q)$ such that $-d = - 4 \det(S)$ belongs to $\Q^\times - (\Q^\times)^2 $. The other ingredient is a Hecke character $\Lambda$ of $K^\times \bs \A_K^\times$ where $K = \Q(\sqrt{-d})$. For any such $S$ and
$\Lambda$, it makes sense to ask whether a cuspidal automorphic representation $\pi$ of $\GSp_4(\A)$ has a global Bessel model of type $(S, \Lambda)$. The interested reader should consult~\cite{nov, fur, prasadbighash} for further details.

We say  that the type $(S, \Lambda)$ is \emph{fundamental}
if each of the following conditions is satisfied:
\begin{enumerate}
  \item $S=\mat{a}{b/2}{b/2}{1},\quad a, b \in \Z,  \quad -d := b^2 - 4a <0 \text{ is a fundamental discriminant}.$

\item The Hecke character $\Lambda$ is unramified at all finite places of $K=\Q(\sqrt{-d})$ and trivial at infinity,
and hence is a character   on the ideal class group of $K$.
\end{enumerate}

The automorphic representations we are interested in come from  Siegel cusp forms of degree 2. Indeed, let $F \in S_k^{(2)}(N)$
be an eigenform for the local Hecke algebras at all places. Then $F$ gives rise to an irreducible cuspidal representation $\pi_F$ of $\GSp_4(\A)$; see \cite{NPS}. It can be shown that there exist infinitely many pairs $(S, \Lambda)$ such that $\pi_F$ has a global Bessel model of type $(S, \Lambda)$. What is highly desirable, however, is that at least one of these pairs be fundamental, i.e., $\pi_F$ have at least one global Bessel model of fundamental type. Indeed, working with a fundamental global Bessel model means that all the associated local Bessel data is unramified, which makes it much easier to compute local zeta integrals and deduce various properties of global $L$-functions.

In a pioneering work, Furusawa~\cite{fur} used Bessel models to prove an integral representation for the $\GSp_4 \times \GL_2$ $L$-function of the twist of an eigenform $F$ in $S_k^{(2)}(1)$ with an elliptic (classical) eigenform $g$ of the same weight. He used this to deduce special value results in the spirit of Deligne's conjecture. However, his results were all obtained under the assumption that $\pi_F$ has a global Bessel model of fundamental type. Subsequent works by various people built upon Furusawa's results and proved various analytic, special value and functoriality properties for $\pi_F$ and its various associated twisted $L$-functions. We refer the reader to the papers~\cite{pitsch, saha1, sah2, transfer} for details. Since all these works depended upon Furusawa's unramified calculation, they were also only valid for Siegel cusp forms which have a global Bessel model of fundamental type. And till recently, it was completely unknown how restrictive this assumption about existence of fundamental Bessel model is.

The link between global Bessel models  and the theme of this article is provided by the following lemma.

\begin{lemma}\label{lemmabesel}Let $N$ be squarefree and $F \in S_k^{(2)}(N)$ be an eigenform for the local Hecke algebras at all places. Let $\pi_F$ be the irreducible cuspidal representation attached to $F$. The following are equivalent:

\begin{enumerate}
\item $\pi_F$ has a global Bessel model of fundamental type $(S, \Lambda).$

\item $R(F, K, \Lambda) \neq 0$ where $K = \Q(\sqrt{\disc(S)})$ is the imaginary quadratic field associated to $S$ as above.
\end{enumerate}

\end{lemma}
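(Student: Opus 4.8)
The plan is to realize both conditions as statements about a single analytic object --- the global Bessel period of the adelic lift of $F$ --- and then to show that this period detects existence of the model on the representation-theoretic side while evaluating explicitly to $R(F,K,\Lambda)$ on the arithmetic side. Write $\Phi_F$ for the automorphic function on $\GSp_4(\A)$ generating $\pi_F$ that is associated to $F$, and let $R = TU \subset \GSp_4$ be the Bessel subgroup attached to $S$: here $T$ is the torus stabilizing $S$ up to similitude, so that $T(\Q) \cong K^\times$ with $K=\Q(\sqrt{\disc(S)})$, and $U$ is the associated unipotent radical carrying the additive character $\theta_S$ determined by $S$. The object of interest is
\begin{equation*}
B_{S,\Lambda}(\Phi_F) = \int_{T(\Q)Z(\A)\bs T(\A)} \int_{U(\Q)\bs U(\A)} \Phi_F(tu)\, \theta_S^{-1}(u)\, \Lambda^{-1}(t)\, du\, dt,
\end{equation*}
where $Z$ denotes the center.

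The first step is to reduce (i) to non-vanishing of this period. Since $\pi_F$ is cuspidal and Bessel models for $\GSp_4$ enjoy multiplicity one, $\pi_F$ admits a global Bessel model of type $(S,\Lambda)$ if and only if the functional $B_{S,\Lambda}$ is not identically zero on $\pi_F$. Because $\pi_F$ is irreducible and $B_{S,\Lambda}$ is Eulerian (it factors as a product of local functionals), non-vanishing on the whole space is equivalent to non-vanishing on the distinguished vector $\Phi_F$, \emph{provided} each local functional is non-zero on the corresponding local lowest-weight/new vector. For a \emph{fundamental} type this local non-vanishing is precisely what the hypotheses buy us: unramifiedness of $\Lambda$ at the finite places and the squarefreeness of $N$ make the finite local Bessel integrals nonzero unramified values, while triviality of $\Lambda$ at infinity together with holomorphy of weight $k$ pins down the archimedean Bessel function (Sugano's explicit formula) and keeps it nonzero.

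The second, and main, step is the unfolding. Performing the inner integral over $U(\Q)\bs U(\A)$ first and using the Fourier expansion $F(Z)=\sum_{T>0}a(F,T)e(\Tr(TZ))$, integration against $\theta_S^{-1}$ isolates a single Fourier coefficient, so that $\int_{[U]}\Phi_F(tu)\theta_S^{-1}(u)\,du$ equals $a(F,S_t)$ times an elementary archimedean factor, where $S_t=\T{g_t}\,S\,g_t$ is the transform of $S$ by the $\GL_2$-component $g_t$ of $t$. The remaining $t$-integral then collapses, because $\Lambda$ is unramified at all finite primes and trivial at infinity, to a finite sum over $T(\Q)Z(\A)\bs T(\A)/\prod_p T(\Z_p)$, which by the Gauss correspondence invoked before \eqref{rfdef} is canonically the ideal class group $\Cl_K$. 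As $t$ ranges over representatives, the forms $S_t$ range over the $\SL_2(\Z)$-classes of binary quadratic forms of discriminant $\disc(S)=-d$, so by \eqref{siegelinv} we may read $a(F,S_t)$ as $a(F,c)$ for $c\in\Cl_K$. Collecting terms, $B_{S,\Lambda}(\Phi_F)$ is a nonzero constant times $\sum_{c\in\Cl_K}a(F,c)\Lambda^{-1}(c)=R(F,K,\Lambda)$, which yields the desired equivalence.

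I expect the main obstacle to be the explicit unfolding of the second step, specifically the verification that the accumulated local constants --- the archimedean factor from the holomorphic vector and the unramified finite-place factors --- are all \emph{nonzero}, so that $B_{S,\Lambda}(\Phi_F)$ is genuinely a nonvanishing multiple of $R(F,K,\Lambda)$ rather than merely proportional up to a constant that might itself vanish. Equally delicate is the bookkeeping that matches the adelic torus cosets with $\Cl_K$ compatibly with the Gauss parametrization of Fourier coefficients; here the defining conditions of fundamental type (the normalization $S=\mat{a}{b/2}{b/2}{1}$ and the descent of $\Lambda$ to a genuine class-group character) are exactly what make the identification clean. Granting these local computations, which are available from Sugano and from the unramified calculations underlying~\cite{fur}, conditions (i) and (ii) are equivalent because each holds precisely when $B_{S,\Lambda}(\Phi_F)\neq 0$.
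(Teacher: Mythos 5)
Your proposal is correct and follows essentially the same route as the paper: the paper's proof simply cites \cite[Prop.~4.3]{sahaschmidt} for the unfolding identity expressing the Bessel period of $\Phi_F$ as a nonzero multiple of $R(F,K,\Lambda)$, and the test-vector result of Pitale--Schmidt~\cite{pitschbessel} (Siegel-parahoric-fixed eigenvectors are test vectors for the local Bessel functionals) for exactly the local non-vanishing issue you flag as the main obstacle. Your two steps are precisely these two cited ingredients, written out rather than referenced.
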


In other words, $\pi_F$ has a  global Bessel model of fundamental type , if and only if $F$ has a non-zero fundamental Fourier coefficient!

\begin{proof} This follows from~\cite[Prop. 4.3]{sahaschmidt} using recent work of Pitale and Schmidt~\cite{pitschbessel} that tells us that a local eigenvector fixed by the Siegel parahoric is a test vector for the local Bessel functional.

\qed

\end{proof}

Together, Proposition~\ref{proprest} and Lemma~\ref{lemmabesel} immediately imply the following theorem.

\begin{theorem}Let $N$ be squarefree and $F \in S_k^{(2)}(N)$ lie in the orthogonal complement of $ S_k^{(2), \text{O}}(N)$ and be an eigenform for the local Hecke algebras at all places. Let $\pi_F$ be the irreducible cuspidal representation attached to $F$. Then $\pi_F$ has a global Bessel model of fundamental type.
\end{theorem}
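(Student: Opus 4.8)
The plan is to deduce the theorem by combining the non-vanishing statement of Proposition~\ref{proprest} with the equivalence recorded in Lemma~\ref{lemmabesel}; no new analytic or representation-theoretic input is required beyond these two results. First I would invoke Proposition~\ref{proprest} (whose hypotheses $k>2$, with $k$ even when $N>1$, I carry over from the surrounding context): since $F$ is a non-zero element of the orthogonal complement of $S_k^{(2),\text{O}}(N)$, there exist infinitely many pairs $(K,\Lambda)$, with $K$ imaginary quadratic and $\Lambda$ a character of the ideal class group $\Cl_K$, for which $R(F,K,\Lambda)\neq 0$. I would fix one such pair and write $K=\Q(\sqrt{-d})$, where $-d$ is the discriminant of $K$ and is therefore automatically a fundamental discriminant.

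Next I would repackage this pair as a \emph{fundamental type} in the sense required for Bessel models. The condition on the character is immediate: an ideal class character is precisely a Hecke character of $K^\times\backslash\A_K^\times$ that is unramified at all finite places and trivial at infinity, so the $\Lambda$ produced above already qualifies. For the condition on the matrix I must exhibit a semi-integral $S=\mat{a}{b/2}{b/2}{1}$ with $\disc(S)=b^2-4a=-d$. This is elementary: choosing $b$ of the parity forced by $-d\equiv 0$ or $1\pmod 4$ and setting $a=(b^2+d)/4$ produces a positive integer $a$ and a positive-definite matrix $S$ whose discriminant equals the fundamental discriminant $-d$, so that $K=\Q(\sqrt{\disc(S)})$ exactly as demanded in Lemma~\ref{lemmabesel}. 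Hence $(S,\Lambda)$ is a fundamental type attached to the same field and character.

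Finally I would apply the implication of Lemma~\ref{lemmabesel} running from $R(F,K,\Lambda)\neq 0$ to the existence of a Bessel model: since $R(F,K,\Lambda)\neq 0$ and $K=\Q(\sqrt{\disc(S)})$, the representation $\pi_F$ has a global Bessel model of fundamental type $(S,\Lambda)$, which is precisely the assertion of the theorem. All the genuine difficulty is hidden inside the two results being invoked: the non-vanishing in Proposition~\ref{proprest} rests on Theorem~\ref{t:mainsiegel} and hence, through the Jacobi/half-integral descent, on the analytic lower bound of Theorem~\ref{t:mainhalf} (Duke--Iwaniec together with a squarefree sieve) and on Theorem~\ref{t:ibukat}, while the equivalence in Lemma~\ref{lemmabesel} depends on the local test-vector theorem of Pitale--Schmidt. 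Granting those, the step carried out here is pure bookkeeping --- matching the arithmetic data $(K,\Lambda)$ to a matrix $S$ in normalized form --- and presents no real obstacle.
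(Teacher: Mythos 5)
Your proposal is correct and follows exactly the paper's route: the paper derives this theorem immediately by combining Proposition~\ref{proprest} with Lemma~\ref{lemmabesel}, and your additional bookkeeping (realizing the pair $(K,\Lambda)$ as a fundamental type $(S,\Lambda)$ with $S=\mat{a}{b/2}{b/2}{1}$, and noting that the hypotheses $k>2$, $k$ even for $N>1$ must be carried over from the context) only makes explicit what the paper leaves implicit.
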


Thus, our work on determination of Siegel cusp forms by fundamental Fourier coefficients has the very pleasant consequence of making \emph{unconditional} all the results mentioned earlier due to Furusawa, Pitale, Schmidt and the author. It is worthwhile to remark here that most of those results are for $N=1$; however, recent work by Pitale and Schmidt~\cite{pitschbessel} should allow them to be extended to the case of arbitrary squarefree $N$.

\subsection{Simultaneous non-vanishing of $L$-functions}\label{s:yosh}

In this subsection, we will describe an application of Proposition~\ref{proprest} to simultaneous non-vanishing of dihedral twists of modular $L$-functions. \emph{This work was done jointly with Ralf Schmidt and will appear in more detailed form in~\cite{sahaschmidt}.}

Let $\Lambda$ be an ideal class group character of an imaginary quadratic field $K$ of discriminant $-d$ and $f$ be a classical holomorphic newform.  One can then form the $L$-function $L(s, \pi_f \times \theta_\Lambda)$; this is the Rankin--Selberg convolution of the automorphic representation $\pi_f$ attached to $f$ and the $\theta$-series $$\theta_\Lambda(z) = \sum_{0 \ne \a \subset O_K}\Lambda(\a) e(N(\a)z).$$
Here, $\theta_\Lambda$ is a holomorphic modular form of weight $1$ and nebentypus $\left(\frac{-d}{*} \right)$ on $\Gamma_0(d)$; it is a cusp form if and only if $\Lambda^2 \ne 1$.

The problem of studying the non-vanishing of the central values $L(\frac12, \pi_f \times \theta_\Lambda)$ arises naturally in several contexts, and a considerable amount of work has been done in this direction. In this subsection, we will describe how our work so far leads to a \emph{simultaneous} non-vanishing result for $L(\frac12, \pi_f \times \theta_\Lambda)$, $L(\frac12, \pi_g \times \theta_\Lambda) $ for two fixed forms $f$, $g$ (but varying $K$ and $\Lambda$) under certain hypotheses.

But first, back to Siegel cusp forms. Let $F \in S_k^{(2)}(N)$ be a Hecke eigenform and $\pi_F$ be the irreducible cuspidal representation attached to $F$. Recall the definition of $R(F, K, \Lambda)$ from~\eqref{rfdef}. A generalization of a famous conjecture of B\"ocherer~\cite{boch-conj} by several people  (Furusawa, Shalika, Martin, Prasad, Takloo-Bighash), leads to the following very interesting Gross-Prasad type conjecture.

\begin{conjecture}\label{bochconj} Let $F \in S_k^{(2)}(N)$ be a Hecke eigenform, $K$ an imaginary quadratic field and $\Lambda$ an ideal class group character of $K$. Suppose that $R(F, K, \Lambda) \neq 0.$ Then $L(\frac12, \pi_F \times \theta_\Lambda) \neq 0$.
\end{conjecture}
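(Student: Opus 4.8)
The plan is to realize $R(F,K,\Lambda)$ as a global Bessel period of $\pi_F$ and then invoke a Gross--Prasad--type identity expressing the square of this period as a central $L$-value multiplied by local factors. Concretely, after adelization the sum over the class group in~\eqref{rfdef} is precisely the global Bessel integral of $\pi_F$ against the character $\Lambda$ of the torus associated to $K$ --- this is exactly the content underlying Lemma~\ref{lemmabesel}. I would therefore approach the conjecture through the refined Gan--Gross--Prasad conjecture for the pair $(\SO_5,\SO_2)$ (equivalently, Bessel periods for $\GSp_4 \times \GL_2$), which in its Ichino--Ikeda form predicts an identity of the shape
$$\frac{|R(F,K,\Lambda)|^2}{\langle F, F\rangle} \;=\; C \cdot \frac{L(\tfrac12,\pi_F \times \theta_\Lambda)}{L(1,\pi_F,\mathrm{Ad})\, L(1,\chi_{-d})} \cdot \prod_v J_v,$$
where $C>0$ is an explicit constant and each $J_v$ is a normalized local integral of a matrix coefficient against the local Bessel functional.

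The key observation is that only the ``easy'' direction of such a formula is needed. The local factors $J_v$ are integrals of positive-definite type and are non-negative, and the normalizing $L$-factors in the denominator are finite and positive at the relevant point; hence the right-hand side vanishes if and only if either the central value $L(\tfrac12,\pi_F\times\theta_\Lambda)$ vanishes or some $J_v$ vanishes. Since $R(F,K,\Lambda)\neq 0$ by hypothesis, the left-hand side is strictly positive, which forces every factor on the right --- in particular $L(\tfrac12,\pi_F\times\theta_\Lambda)$ --- to be non-zero. Thus I would not need the full equivalence, and in particular not the hard converse (``$L\neq 0$ and local non-vanishing $\Rightarrow R\neq 0$'') that typically requires a theta-correspondence or relative-trace-formula input; the implication asserted in the conjecture follows from the period formula alone.

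The real work, then, is establishing that explicit period formula. The natural engine is Furusawa's integral representation~\cite{fur}: one pairs $F$ (embedded in an Eisenstein series on a larger unitary or symplectic group) against $\theta_\Lambda$, and shows by one unfolding that the global zeta integral equals $R(F,K,\Lambda)$ up to normalization, while a second unfolding produces $L(s,\pi_F\times\theta_\Lambda)$ times a product of local zeta integrals; specializing to $s=\tfrac12$ then gives the desired identity once the local integrals are evaluated. The hard part will be the local computations: at the archimedean place, where $\pi_{F,\infty}$ is a holomorphic discrete series of weight $k$ and one must evaluate the Bessel zeta integral explicitly and verify its positivity, and at the primes dividing $Nd$, where ramification of $\pi_F$ and of $\Lambda$ forces a delicate choice of test vectors. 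For squarefree $N$ and even weight $k$ these computations run parallel to those already carried out in the unramified case, and recent work of Pitale--Schmidt on test vectors for local Bessel functionals should supply exactly the positivity and non-vanishing of the relevant $J_v$; assembling these local results into the global identity would complete the argument.
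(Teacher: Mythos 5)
You have set out to prove a statement that the paper itself presents as an open conjecture: the text says explicitly that Conjecture~\ref{bochconj} ``is open at the moment'' and that it is known only for Yoshida lifts, via Theorem~\ref{prasadbigth} of Prasad--Takloo-Bighash~\cite{prasadbighash}. So there is no proof in the paper to compare against, and your proposal should be judged on whether it actually closes the gap. It does not. The entire argument rests on the refined Gan--Gross--Prasad/Ichino--Ikeda identity for Bessel periods of $(\SO_5,\SO_2)$, which at the level of generality you need is itself an unproven conjecture that is at least as strong as Conjecture~\ref{bochconj}: the ``easy direction'' you isolate (period nonvanishing forces $L(\frac12)\neq 0$, granting positivity of the local factors $J_v$) is a correct logical observation, but it only transfers the difficulty into ``establishing that explicit period formula,'' which you acknowledge is ``the real work'' and then do not carry out. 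Deferring the whole content of a conjecture to a stronger conjecture is not a proof.

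There is also a concrete technical error in the route you propose for obtaining the period formula. Furusawa's integral representation~\cite{fur} does not unfold to $R(F,K,\Lambda)$: the unfolding of his $\GSp_4\times\GL_2$ zeta integral produces the global \emph{Bessel function} $B_F$ of $F$ (a Fourier--Bessel coefficient evaluated along the Bessel subgroup), and the resulting identity has the shape $Z(s,F,g)=L(s,\pi_F\times\pi_g)\cdot\prod_v(\text{local zeta integrals})$. This gives an integral representation of the $L$-function assuming a Bessel model exists; it does not produce an identity of the form $|R(F,K,\Lambda)|^2/\langle F,F\rangle = C\cdot L(\frac12,\pi_F\times\theta_\Lambda)\cdot\prod_v J_v$. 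No second unfolding of that construction yields the square of the period, and the passage from ``integral representation'' to ``exact central-value formula in terms of $|R|^2$'' is precisely the hard analytic input (a relative trace formula or a delicate theta-correspondence argument) that was missing when this paper was written. What the paper does prove unconditionally is only the link between $R(F,K,\Lambda)\neq 0$ and the existence of a fundamental global Bessel model (Lemma~\ref{lemmabesel}), which is the hypothesis under which Furusawa's machinery applies --- not the conclusion about $L(\frac12)$.
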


While Conjecture~\ref{bochconj} is open at the moment, it has been proved for certain special Siegel cusp forms known as Yoshida lifts.

The data required to define a Yoshida lift are as follows. Let  $N_1, N_2$ be two squarefree integers that are \emph{not} coprime and put $N = \text{lcm}(N_1, N_2)$. Let $f$ be a classical newform of weight $2$ on $\Gamma_0(N_1)$ and $g$ be a classical newform of weight $2k$ on $\Gamma_0(N_2)$. Finally, assume that for each prime $p$ dividing $\text{gcd}(N_1, N_2)$, $f$ and $g$ have the same Atkin-Lehner eigenvalue at $p$.
\bigskip

\textbf{Existence of Yoshida lift:} \emph{Under the above assumptions, there exists a non-zero element $F \in S_{k+1}^{(2)}(N)$ that has the following properties}:
\begin{enumerate}

\item $F$ lies in the orthogonal complement of $ S_{k+1}^{(2), \text{O}}(N)$.

\item $F$ is an eigenform for the local Hecke algebras at all places.

\item For all automorphic representations $\sigma$ on $\GL_n(\A)$ for each $n$, we have $L(s, \pi_F \times \sigma) = L(s, \pi_f \times \sigma)L(s, \pi_g \times \sigma).$ Here and elsewhere, $L(s,  \ )$ denotes the complete global Langlands $L$-function for the relevant representations.

\end{enumerate}
\bigskip

The above lift was first investigated by Yoshida and has been studied extensively by B{\"o}cherer and Schulze-Pillot. In fact, the Yoshida lift is a certain case of Langlands functoriality; see Sect.~\cite[Sec. 3.2]{sahaschmidt} for more details.

We briefly explain how the Yoshida lift $F$ is constructed from the classical cusp forms $f$ and $g$. First, we fix a definite quaternion algebra $D$ which is unramified at all finite primes outside $\gcd(N_1, N_2)$. Via the Jacquet-Langlands correspondence, we transfer   $\pi_f$, $\pi_g$ to representations $\pi_f'$, $\pi_g'$ on $D^\times(\A)$. Using the isomorphism $$(D^\times \times D^\times) / \Q^\times \cong GSO(4)$$ we obtain an automorphic representation $\pi'_{f,g}$ on $GSO(4, \A)$. We use the theta lifting to transfer $\pi'_{f,g}$ to the automorphic representation $\pi_F$ on $\GSp_4(\A)$. Finally, the form $F$ is constructed by picking a suitable vector in the space of $\pi_F$. For the details of this last step, as well as a more thorough explanation of Yoshida lifts from the representation theoretic point of view, we refer the reader to the author's paper with Schmidt~\cite[Sec. 3]{sahaschmidt}.

Now, recall the Conjecture~\ref{bochconj} stated earlier. It turns out that this conjecture has been recently proved for Yoshida lifts by Prasad and Takloo-Bighash.

\begin{theorem}[Prasad--Takloo-Bighash~\cite{prasadbighash}] \label{prasadbigth}
Let $f, g$ be classical newforms as above, and let $F \in S_{k+1}^{(2)}(N)$ be their Yoshida lift. Suppose that $K$ is an imaginary quadratic field and $\Lambda$ an ideal class group character of $K$  such that $R(F, K, \Lambda) \neq 0.$ Then $L(\frac12, \pi_F \times \theta_\Lambda) \neq 0$.
\end{theorem}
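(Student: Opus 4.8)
The starting point is the functoriality property of the Yoshida lift. Applying the identity $L(s,\pi_F\times\sigma)=L(s,\pi_f\times\sigma)\,L(s,\pi_g\times\sigma)$ with $\sigma=\pi_{\theta_\Lambda}$, the automorphic representation of $\GL_2(\A)$ generated by $\theta_\Lambda$, gives the factorization
\[
L(\tfrac12,\pi_F\times\theta_\Lambda)=L(\tfrac12,\pi_f\times\theta_\Lambda)\cdot L(\tfrac12,\pi_g\times\theta_\Lambda).
\]
So the theorem reduces to proving that each factor on the right is non-zero. Since $\theta_\Lambda$ is the automorphic induction $\mathrm{AI}_{K/\Q}(\Lambda)$, these are base-change central values, $L(\tfrac12,\pi_f\times\theta_\Lambda)=L(\tfrac12,\mathrm{BC}_{K/\Q}(\pi_f)\otimes\Lambda)$ and similarly for $g$, and it is precisely such values that are controlled by toric periods in the sense of Waldspurger.

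The plan is to tie the hypothesis $R(F,K,\Lambda)\neq 0$ to the non-vanishing of these toric periods by exploiting the construction of $F$ as a theta lift. Recall that $F$ is obtained by theta lifting $\pi'_{f,g}=\pi_f'\boxtimes\pi_g'$ from $\mathrm{GSO}(4,\A)\cong\big((D^\times\times D^\times)/\Q^\times\big)(\A)$, where $\pi_f',\pi_g'$ are the Jacquet--Langlands transfers of $\pi_f,\pi_g$ to the definite quaternion algebra $D$. First I would set up the seesaw dual pair relating the Bessel subgroup of $\GSp_4$ attached to $S$ with $\mathrm{GSO}(4)$ and use the theta kernel to unfold the Bessel period; up to harmless normalizing factors this period is exactly the twisted sum $R(F,K,\Lambda)$ of Fourier coefficients. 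The outcome should be an identity expressing $R(F,K,\Lambda)$ as a period of $\pi'_{f,g}$ over the torus $K^\times$ sitting inside $\mathrm{GSO}(4)$ (the embedding $K\hookrightarrow D$ is available because the local conditions on $K$ are arranged so that $D$ does not split where $K$ is inert). Since the underlying space is a tensor product, this period factors as a product of Waldspurger toric periods
\[
P(\phi_f')=\int_{K^\times\bs\A_K^\times}\phi_f'(t)\,\Lambda^{-1}(t)\,dt,\qquad P(\phi_g')=\int_{K^\times\bs\A_K^\times}\phi_g'(t)\,\Lambda^{-1}(t)\,dt,
\]
for the vectors $\phi_f'\in\pi_f'$, $\phi_g'\in\pi_g'$ corresponding to $F$ (against the twists of $\Lambda$ dictated by the central-character bookkeeping on $\Q^\times$).

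It then remains to pass from toric periods to $L$-values, for which I would invoke the ``easy'' direction of Waldspurger's period formula: $|P(\phi_f')|^2$ equals a product of local integrals --- each non-zero for the chosen local vectors --- times $L(\tfrac12,\pi_f\times\theta_\Lambda)$, and likewise for $g$. In particular, if $L(\tfrac12,\pi_f\times\theta_\Lambda)=0$ then $P(\phi_f')=0$; contrapositively, $P(\phi_f')\neq 0$ forces $L(\tfrac12,\pi_f\times\theta_\Lambda)\neq 0$. Now assemble the pieces: the hypothesis $R(F,K,\Lambda)\neq 0$ and the factorization of $R$ force both toric periods $P(\phi_f')$ and $P(\phi_g')$ to be non-zero, hence both central values $L(\tfrac12,\pi_f\times\theta_\Lambda)$ and $L(\tfrac12,\pi_g\times\theta_\Lambda)$ are non-zero, and therefore their product $L(\tfrac12,\pi_F\times\theta_\Lambda)$ is non-zero, as required.

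The hard part is the seesaw identity of the second step: establishing the precise, correctly normalized relationship between the Bessel period of the theta lift $\pi_F$ and the toric period on $\mathrm{GSO}(4)$, together with the matching of test vectors. The delicate computations occur at the primes dividing $\gcd(N_1,N_2)$, where $D$ ramifies: one must verify that the local datum producing the Yoshida vector $F$ corresponds under the local theta correspondence to a vector whose local toric integral does not vanish, so that no spurious local zero annihilates the global period. The hypothesis that $f$ and $g$ have equal Atkin--Lehner eigenvalues at these primes is exactly the input that makes the local epsilon-factor conditions consistent, so that the definite quaternion algebra $D$ supports non-zero local toric functionals there; at all remaining places the data are unramified and local non-vanishing is automatic.
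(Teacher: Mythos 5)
The paper does not prove this theorem: it is quoted verbatim from Prasad--Takloo-Bighash \cite{prasadbighash}, so there is no in-paper argument to compare against. That said, your sketch reproduces the actual strategy of the cited work: factor $L(\frac12,\pi_F\times\theta_\Lambda)$ via the functorial property of the Yoshida lift, identify $R(F,K,\Lambda)$ with a Bessel period of the theta lift from $\mathrm{GSO}(4)\cong(D^\times\times D^\times)/\Q^\times$, use a seesaw to convert that Bessel period into torus periods on the quaternion side, and finish with the easy direction of Waldspurger (period nonzero $\Rightarrow$ central $L$-value nonzero). Two cautions. First, your phrase ``a period of $\pi'_{f,g}$ over the torus $K^\times$ sitting inside $\mathrm{GSO}(4)$'' is imprecise in a way that matters: the seesaw produces the group $\mathrm{GSO}(K)\times\mathrm{GSO}(K^\perp)\cong(K^\times\times K^\times)/(\text{similitude condition})$, not a single diagonal copy of $K^\times$; it is only because the two $K^\times$-factors act separately on the two tensor slots that the period splits into the two independent toric integrals $P(\phi_f')$ and $P(\phi_g')$ you display --- over a diagonal torus the period would be of triple-product type and would not factor. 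Your displayed formulas are consistent with the correct group, but the justification should say so. Second, what you label ``the hard part'' --- the correctly normalized seesaw identity and the verification that the local data defining the Yoshida vector yield nonvanishing local toric functionals at the primes dividing $\gcd(N_1,N_2)$ --- is essentially the entire content of \cite{prasadbighash} and is only described, not proved, in your write-up; as it stands the proposal is a correct strategy outline rather than a self-contained proof, which is consistent with the paper's own decision to cite the result rather than prove it.
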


Putting together Proposition~\ref{proprest}, Theorem~\ref{prasadbigth} and the defining property of the Yoshida lift, we deduce the following result on simultaneous non-vanishing of $L$-functions.

\begin{theorem}[\cite{sahaschmidt}]\label{th:simulnonvan}Let $k>1$ be an odd integer. Let $N_1$, $N_2$ be two positive, squarefree integers such that $M = \gcd(N_1, N_2)>1$. Let $f$ be a holomorphic newform of weight $2$ on $\Gamma_0(N_1)$ and $g$ be a holomorphic newform of weight $2k$ on $\Gamma_0(N_2)$. Assume that for all primes $p$ dividing $M$, the Atkin-Lehner eigenvalues of $f$ and $g$ coincide. Then there exists an imaginary quadratic field $K$ and a character $\Lambda \in  \widehat{\Cl_K}$ such that $L(\frac12, \pi_f \times \theta_\Lambda) \ne 0$ and $L(\frac12, \pi_g \times \theta_\Lambda) \ne 0$. In fact, if $D(f,g)$ is the set of $d$ satisfying the following conditions:

 \begin{enumerate}
 \item $d>0$ is an odd, squarefree integer and $-d$ is a fundamental discriminant,
  \item \label{part 2} There exists an ideal class group character $\Lambda$ of $K=\Q(\sqrt{-d})$ such that  $L(\frac12, \pi_f \times \theta_\Lambda) \neq 0$ and $L(\frac12, \pi_g \times \theta_\Lambda) \neq 0$,
 \end{enumerate}

then, for an appropriate absolute constant $\delta>0$, one has the lower bound
\begin{equation} \label{bounddelta}|\{0<d<X, \ d\in  D(f,g) \}| \gg_{f,g,\delta} X^\delta.\end{equation}

\end{theorem}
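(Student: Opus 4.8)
The plan is to deduce the theorem by feeding the Yoshida lift attached to $f$ and $g$ into the chain Theorem~\ref{t:mainsiegel} $\to$ Theorem~\ref{prasadbigth}, and then using the $L$-function factorization that defines the lift. First I would invoke the existence statement for the Yoshida lift. Since $N_1, N_2$ are squarefree and $M = \gcd(N_1, N_2) > 1$, the level $N = \text{lcm}(N_1, N_2)$ is squarefree and $>1$, and the coincidence of Atkin--Lehner eigenvalues at primes dividing $M$ guarantees a non-zero $F \in S_{k+1}^{(2)}(N)$ satisfying properties (1)--(3). The key bookkeeping point is that, because $k>1$ is odd, the weight $k+1$ is even and strictly greater than $2$; together with $N$ squarefree, $F$ lands exactly inside the hypotheses of Theorem~\ref{t:mainsiegel} (with its ``$k$'' being our $k+1$) and, by property (1), inside the orthogonal complement of $S_{k+1}^{(2),\text{O}}(N)$.

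Next I would apply Theorem~\ref{t:mainsiegel} to $F$. It yields $\gg_{F,\delta} X^\delta$ squarefree integers $0 < d < X$, for each of which there is a semi-integral $S$ with $-\disc(S) = d$ and $a(F, S) \neq 0$. For such $d$ the discriminant $\disc(S) = -d$ is squarefree, so by the equivalence recalled just before Theorem~\ref{t:ibukat}, $S$ is fundamental with odd discriminant; hence $d$ is odd and $-d$ is a fundamental discriminant. This is exactly condition (1) defining $D(f,g)$. To produce the character $\Lambda$ I would use Fourier inversion on the class group, precisely as in the proof of Proposition~\ref{proprest}: setting $K = \Q(\sqrt{-d})$ and regarding the Fourier coefficients at forms of discriminant $-d$ as a function on $\Cl_K$, the non-vanishing of some $a(F, c)$ forces $R(F, K, \Lambda) \neq 0$ for at least one ideal class character $\Lambda$.

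For this $\Lambda$, Theorem~\ref{prasadbigth} gives $L(\tfrac12, \pi_F \times \theta_\Lambda) \neq 0$. I then invoke the defining factorization (3) of the Yoshida lift with $\sigma = \theta_\Lambda$ (viewed as its associated representation of $\GL_2$):
$$L(\tfrac12, \pi_F \times \theta_\Lambda) = L(\tfrac12, \pi_f \times \theta_\Lambda)\, L(\tfrac12, \pi_g \times \theta_\Lambda).$$
Since the left-hand side is non-zero, both factors on the right are non-zero, so $d$ satisfies condition (2) of $D(f,g)$ as well. Thus every $d$ produced by Theorem~\ref{t:mainsiegel} belongs to $D(f,g)$, the index set of Theorem~\ref{t:mainsiegel} is contained in $D(f,g)$, and the lower bound~\eqref{bounddelta} is inherited verbatim; the qualitative claim (existence of at least one good pair $(K,\Lambda)$) follows a fortiori.

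The genuinely deep inputs are external and already cited: the non-vanishing result underlying Theorem~\ref{t:mainsiegel} (which rests on Theorem~\ref{t:mainhalf}) and the Gross--Prasad-type theorem of Prasad--Takloo-Bighash. Given those, the remaining work is purely organizational, and the only point I expect to require genuine care is the legitimacy of evaluating factorization (3) at the central point $s = \tfrac12$ for $\theta_\Lambda$ --- in particular when $\Lambda^2 = 1$, where $\theta_\Lambda$ is Eisenstein rather than cuspidal. Since (3) is asserted as an identity of completed Langlands $L$-functions for \emph{all} representations of $\GL_2$, this causes no difficulty, so I anticipate no substantive obstacle beyond correctly matching the parity and squarefreeness constraints between the lift and Theorem~\ref{t:mainsiegel}.
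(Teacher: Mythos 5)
Your proposal is correct and follows exactly the paper's own route: existence of the Yoshida lift $F\in S_{k+1}^{(2)}(N)$, the quantitative non-vanishing of fundamental Fourier coefficients (Theorem~\ref{t:mainsiegel}, packaged via Fourier inversion on $\Cl_K$ as in Proposition~\ref{proprest}), the Prasad--Takloo-Bighash theorem, and the factorization $L(s,\pi_F\times\theta_\Lambda)=L(s,\pi_f\times\theta_\Lambda)L(s,\pi_g\times\theta_\Lambda)$ at $s=\tfrac12$. The parity bookkeeping ($k+1$ even and $>2$, $N$ squarefree, $d$ odd squarefree giving a fundamental discriminant) is handled correctly, and your remark about the Eisenstein case $\Lambda^2=1$ is a reasonable point of care that the completed-$L$-function formulation of property (3) indeed resolves.
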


For related work on non-vanishing of $L$-functions, we refer the reader to the introductions of the papers \cite{micven07} and~\cite{ono-skinner}.

\begin{remark}By considering Yoshida lifts for more general (i.e. non squarefree) levels and more general congruence subgroups, it is possible to relax some of the conditions on $f$, $g$, $N_1$, $N_2$ above. For more details, we refer the reader to~\cite{sahapetersson}.

\end{remark}

\begin{remark}
We say a few words about the restrictions on $f$ and $g$ in Theorem~\ref{th:simulnonvan}. The conditions that $N_1, N_2$ are squarefree and that the Atkin-Lehner eigenvalues of $f$ and $g$ coincide are needed to ensure that there exists a holomorphic Yoshida lift attached to $(f,g)$ with respect to a Siegel-type congruence subgroup $\Gamma_0^{(2)}(N) \subset \Sp_4(\Z)$ of squarefree level $N$. Indeed, our key result (Theorem~\ref{t:mainsiegel}) on non-vanishing fundamental Fourier coefficients is only proved for Siegel cusp forms with respect to such congruence subgroups. However, even if these two conditions are removed, $(f,g)$ will still have a Yoshida lift (possibly with respect to some other congruence subgroup) provided that there is a prime $p$ dividing $\gcd(N_1, N_2)$ such that $\pi_f$, $\pi_g$ are both discrete series at $p$.\footnote{The restriction that there is a prime dividing $\gcd(N_1, N_2)$ where $\pi_f$, $\pi_g$ are both discrete series will probably be very difficult to remove by our method, because without this condition there are no Jacquet-Langlands transfers and hence no (holomorphic) Yoshida lifts. It is conceivable that one could still consider the ``Fourier coefficients" of the non-holomorphic Yoshida lift in this setup and prove a non-vanishing result for those.} So, an analogue of Theorem~\ref{t:mainsiegel} for Siegel cusp forms with respect to more general congruence subgroups will allow us to remove some of the restrictions on $f$ and $g$. This is currently work in progress by J. Marzec at the University of Bristol. To remove the restriction on the weight of $g$ would require us to extend Theorem~\ref{t:mainsiegel} to vector valued Siegel cusp forms, which seems possible at least in principle.
\end{remark}

\begin{acknowledgement}I would like to thank Emmanuel Kowalski, Ameya Pitale and Ralf Schmidt for helpful comments on an earlier draft of this article. I would also like to thank Ramin Takloo-Bighash for suggesting the application of Section~\ref{s:yosh} to me.
\end{acknowledgement}

\myaddress{ETH Z\"urich -- D-MATH, R\"amistrasse 101, 8092 Z\"urich, Switzerland} \myemail{abhishek.saha@math.ethz.ch}

  \end{document}